\documentclass[12pt,reqno]{amsart}
\usepackage[dvips]{color}
\usepackage{amsmath}
\usepackage{amsxtra}
\usepackage{amscd}
\usepackage{amsthm}
\usepackage{amsfonts}
\usepackage{amssymb,hyperref}
\usepackage{eucal}
\usepackage{epsfig}
\usepackage{graphics}
\usepackage[matrix,arrow]{xy}

\input xy
\xyoption{all}

\setlength{\hoffset}{-1in}
\setlength{\voffset}{-1in}
\setlength{\oddsidemargin}{1.2in}
\setlength{\evensidemargin}{1.2in}
\setlength{\textwidth}{6.in}
\setlength{\textheight}{9in}
\setlength{\topmargin}{0.9in}
\setlength{\baselineskip}{14pt}
\theoremstyle{plain}
\newtheorem{theorem}{Theorem}
\newtheorem{lemma}[theorem]{Lemma}
\newtheorem{thm}[theorem]{Theorem}
\newtheorem{cor}[theorem]{Corollary}
\newtheorem{prop}[theorem]{Proposition}

\newtheorem{example}[theorem]{Example}
\newtheorem{exercise}[theorem]{Exercise}



\newcommand{\g}{\mathfrak{g}}


\newcommand{\nc}{\newcommand}
\nc{\on}{\operatorname}
\nc{\la}{\lambda}
\nc{\wh}{\widehat}
\nc{\wt}{\widetilde}
\nc{\sw}{{\mathfrak s}{\mathfrak l}}
\nc{\ghat}{\wh{\g}}
\nc{\hhat}{\wh{\h}}
\nc{\mc}{\mathcal}
\nc{\bi}{\bibitem}
\nc{\pa}{\partial}
\nc{\ppart}{(\!(t)\!)}
\nc{\pparx}{(\!(X)\!)}
\nc{\zpart}{(\!(z)\!)}
\nc{\n}{{\mathfrak n}}
\nc{\ol}{\overline}
\nc{\mb}{\mathbf}
\nc{\bb}{{\mathfrak b}}
\nc{\su}{\wh\sw_2}
\nc{\h}{{\mathfrak h}}
\nc{\can}{\on{can}}
\nc{\ntil}{\wt{\n}}
\nc{\pone}{{\mathbb P}^1}
\nc{\bs}{\backslash}
\nc{\al}{\alpha}
\nc{\gt}{{\mathfrak g}'}
\nc{\ds}{\displaystyle}
\nc{\Bun}{\on{Bun}}
\nc{\Gr}{\on{Gr}}

\def\neg{\negthinspace}

\nc{\ka}{\kappa}

\nc{\lka}{{}^L\neg\ka}
\nc{\LP}{{}^L\neg P}
\nc{\LQ}{{}^L\neg Q}
\nc{\lkt}{{}^L\neg\wt\ka}
\nc{\Hom}{\on{Hom}}
\nc{\OO}{\mathcal O}
\nc{\Loc}{\on{Loc}}
\nc{\sto}{\!\!\shortto\!\!}
\nc{\hsl}{\widehat{\mathfrak sl}}

\nc{\Fq}{{\mathbb F}_q}
\nc{\Fone}{{\mathbb F}_1}

\nc{\Pic}{\on{Pic}}

\theoremstyle{definition}

\usepackage{stmaryrd}
\usepackage{trimclip}

\makeatletter
\DeclareRobustCommand{\shortto}{%
  \mathrel{\mathpalette\short@to\relax}%
}

\newcommand{\short@to}[2]{%
  \mkern2mu
  \clipbox{{.3\width} 0 0 0}{$\m@th#1\vphantom{+}{\shortrightarrow}$}%
  }
\makeatother
\pagestyle{plain}

\begin{document}

\title{Iterating sine, equivalence classes of variable changes, and groups with few conjugacy classes}

\author{Pavel Etingof}

\address{Department of Mathematics, MIT, Cambridge, MA 02139, USA}

\maketitle

\begin{abstract}
This is an expository paper about iterations of 
a smooth real function $f$ on $[0,\varepsilon)$ 
such that $f(0)=0$, $f'(0)=1$, and $f(x)<x$ for $x>0$, i.e., the sequence defined by $x_{n+1}=f(x_n)$. This sequence has interesting asymptotics, whose study leads to the question of classifying conjugacy classes in the group of formal changes of variable $y=f(x)$, i.e., formal series $f(x)=x+a_2x^2+a_3x^2+...$ with real coefficients (under composition). The same classification applies over a finite field $\Bbb F_p$ for suitably truncated series $f$, defining a family of $p$-groups which have the smallest number of conjugacy classes for a given order, i.e., are the ``most noncommutative" finite groups currently known. The paper should be accessible to undergraduates and at least partially to advanced high school students. 
\end{abstract}

\tableofcontents

\section{Iterations of $\sin x$: numerical experiments} What happens if we begin with some number $0<x_{0}<\pi$ and start pressing the ${\bf sin}$ button on the calculator, i.e., compute terms of the sequence defined by the recursion
$$
x_{n+1}=\sin x_{n}?
$$
How does this sequence behave?

First, $0<x_{n+1}<x_{n}$, so $x_{n}$ is decreasing and positive, thus has a limit $x$. This limit must satisfy the equation $x=\sin x$, so $x=0$. Thus 
$$
\lim_{n\to \infty}x_{n} = 0.
$$ 

But how fast does $x_n$ converge to $0$? For example, let $x_0=1$. Then the approximate values of $x_n$ for some $n$ are

\[
\begin{array}{|c|c|c|c|c|c|c|c|c|}
\hline
\text{} & x_{300} & x_{400} & x_{500} & x_{600} & x_{700} & x_{800} & x_{900} & x_{1000} \\
\hline
x_n & 0.100 & 0.086 & 0.077 & 0.070 & 0.065 & 0.061 & 0.057 & 0.054 \\
\hline
\end{array}
\]

This suggests power decay: 
$$
x_{n} \sim C n^{-b}
$$ 
for some $C, b>0$. 

Let us assume that this is so and determine $b$ numerically. To this end,  
let us compute $-\frac{1}{m}\log_2 x_{2^m}$, which is supposed to converge to $b$, for a large $m$. For $m=20$ we get the approximate value $0.485$ which suggests that $b=\frac{1}{2}$. 

Assuming this is the case, let us find $C$ numerically. For this let us compute $nx_n^2$, which is supposed to converge to $C^2$, for a large $n$. This sequence varies very slowly (which confirms our guess that $b=\frac{1}{2}$), and for $n=1000$ we get the approximate value $2.98$, which suggests 
that $C^2=3$, i.e., $C=\sqrt{3}$. Thus we expect that 
$$
x_{n} \sim \sqrt{\frac{3}{n}},\ n\to \infty.
$$

\section{Heuristic derivation of the leading asymptotics} Can we now derive this formula analytically? 
To this end let us look for a function $y=f(x)$ such that 
$$
C(n+1)^{-b}=f\left(Cn^{-b}\right),\ n\in \Bbb R_{>0}.
$$
If $x=C n^{-b}$ then $n=\left(\frac{x}{C}\right)^{-\frac{1}{b}}$ and thus
$$
y=C(n+1)^{-b}=Cn^{-b}(1+n^{-1})^{-b}=x\left(1+\left(\frac{x}{C}\right)^{\frac{1}{b}}\right)^{-b}.
$$

Near $x=0$ this expands as
$$
y=x\left(1-b C^{-\frac{1}{b}} x^{\frac{1}{b}}+\cdots\right).
$$

It is natural to expect that the correct values of $b,C$ are the ones for which this expansion is the best possible approximation of $\sin x$ near $x=0$.
Recall that
$$
\sin x=x-\frac{x^{3}}{6}+\cdots=x\left(1-\frac{x^{2}}{6}+\cdots\right).
$$
Thus for the best fit we need 
$
\frac{1}{b}=2$,\ $\frac{1}{2} C^{-2}=\frac{1}{6},
$ 
i.e., 
$$
b=\frac{1}{2},\quad 
C=\sqrt{3},
$$ 
which is consistent with the numerical results of the previous section. 

\section{Rigorous proof of the leading asymptotics} But this is a heuristic argument, not a proof. How to turn it into a proof?
For this purpose note the following lemma. Suppose that $f,g: [0, \varepsilon)\to \Bbb R$ are 
continuous functions such that 
$$
0<f(x) \leqslant g(x)<x
$$ 
for $x \in(0, \varepsilon)$ (so $f(0)=g(0)=0$), and $f$ or $g$ is nondecreasing. Let $x_{n}$ solve the recursion 
$$
x_{n+1}=f\left(x_{n}\right)
$$ 
and $y_{n}$ solve the recursion 
$$
y_{n+1}=g\left(y_{n}\right).
$$

\begin{lemma}\label{l1} If $x_0 \leqslant y_0$ then 
$x_n \leqslant y_n$
for all $n \geqslant 0$.
\end{lemma}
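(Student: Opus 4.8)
The plan is to prove this by induction on $n$, using the monotonicity hypothesis (that either $f$ or $g$ is nondecreasing) as the crucial ingredient that lets the comparison propagate.

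The plan is to prove the inequality by induction on $n$, with the monotonicity hypothesis supplying the key step that lets the comparison $x_n \leq y_n$ propagate to $x_{n+1}\leq y_{n+1}$.

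First I would record a preliminary observation guaranteeing that the sequences are well defined and stay inside the domain $[0,\varepsilon)$, so that all the terms $f(x_n)$, $g(y_n)$ actually make sense. Since $0<f(x)<x$ for $x\in(0,\varepsilon)$ and $f(0)=0$, we have $f([0,\varepsilon))\subseteq[0,\varepsilon)$: each point of $(0,\varepsilon)$ is mapped strictly below itself, and $0$ is fixed. The same holds for $g$. Hence, starting from $x_0,y_0\in[0,\varepsilon)$, all iterates $x_n$ and $y_n$ remain in $[0,\varepsilon)$, and the recursions are legitimate.

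The base case $n=0$ is exactly the hypothesis $x_0\leq y_0$. For the inductive step, assume $x_n\leq y_n$; I must show $f(x_n)\leq g(y_n)$. Here I split into two cases according to which of $f,g$ is nondecreasing. If $g$ is nondecreasing, I chain
$$f(x_n)\leq g(x_n)\leq g(y_n),$$
where the first inequality is the hypothesis $f\leq g$ (valid on all of $[0,\varepsilon)$, with equality at $0$) and the second is the monotonicity of $g$ applied to $x_n\leq y_n$. If instead $f$ is nondecreasing, I chain
$$f(x_n)\leq f(y_n)\leq g(y_n),$$
applying monotonicity of $f$ first and then $f\leq g$. Either way $x_{n+1}\leq y_{n+1}$, closing the induction.

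I do not anticipate a genuine obstacle here; the lemma is elementary. The only point demanding care is that the comparison inequality $f\leq g$ and the monotonicity inequality must be applied in the correct order, and that order depends on which function is assumed nondecreasing — this is precisely why the hypothesis needs only one of $f,g$ (but at least one) to be monotone. A secondary subtlety is that $0<f(x)$ and $f(x)<x$ hold only on the open interval $(0,\varepsilon)$, so I would keep track of the degenerate possibility $x_n=0$, which forces the sequence to stay at $0$; in that case all the relevant inequalities still hold, as equalities.
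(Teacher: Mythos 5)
Your proof is correct and follows essentially the same route as the paper's: induction on $n$, with the inductive step split into two cases according to which of $f$, $g$ is nondecreasing, chaining the comparison inequality $f\leq g$ with monotonicity in the appropriate order. The extra remarks on well-definedness of the iterates and the degenerate case $x_n=0$ are sound but not needed beyond what the paper already takes as implicit.
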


\begin{proof} 
The proof is by induction on $n$ starting with $n=0$. 
If $x_{m}\le y_{m}$ then if $g$ is nondecreasing then 
$$
x_{m+1}=f(x_{m})\le g(x_{m})\le g(y_{m})=y_{m+1},
$$
and if $f$ is nondecreasing then 
$$
x_{m+1}=f(x_{m})\le f(y_{m})\le g(y_{m})=y_{m+1}.
$$
\end{proof} 

\begin{cor}\label{c2} For any $x_0, y_0$, there exists $k$ such that
$x_{n+k}\le y_n$ for all $n\ge 0$. 
\end{cor}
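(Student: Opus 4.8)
The plan is to reduce the statement to Lemma~\ref{l1} by absorbing the shift $k$ into a choice of comparison sequence. The key observation is that the truncated sequence $\tilde x_n := x_{n+k}$ again solves the recursion $\tilde x_{n+1} = f(\tilde x_n)$, now with initial value $\tilde x_0 = x_k$. So if we can arrange $x_k \le y_0$ for some $k$, then Lemma~\ref{l1}, applied to the pair $(\tilde x_n, y_n)$ whose initial terms satisfy $\tilde x_0 = x_k \le y_0$, immediately gives $x_{n+k} = \tilde x_n \le y_n$ for all $n \ge 0$, which is exactly the assertion.

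It therefore remains only to produce such a $k$, and for this I would first establish that $x_n \to 0$. Assuming $x_0 \in (0,\varepsilon)$, the inequalities $0 < f(x) < x$ on $(0,\varepsilon)$ show inductively that the sequence $x_n$ stays in $(0,\varepsilon)$ and is strictly decreasing, hence converges to some limit $L \ge 0$. Passing to the limit in $x_{n+1} = f(x_n)$ and using the continuity of $f$ yields $L = f(L)$; but $f(L) < L$ whenever $L > 0$, so necessarily $L = 0$. Consequently, for the given $y_0 > 0$ there is an index $k$ with $x_k \le y_0$, and the previous paragraph finishes the argument. The degenerate case $x_0 = 0$ is trivial, since then $x_n \equiv 0 \le y_n$.

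I do not expect a serious obstacle here. The two points that require a little care are the convergence $x_n \to 0$, which rests on the strict inequality $f(x) < x$ together with continuity (this is the same fixed-point reasoning used for $\sin$ in Section~1), and the remark that shifting the index preserves the recursion, so that Lemma~\ref{l1} applies verbatim to $\tilde x_n$ without any new hypothesis.
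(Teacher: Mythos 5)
Your argument is correct and is essentially the same as the paper's: both prove $x_n\to 0$ via monotone convergence and the fixed-point equation $L=f(L)$, pick $k$ with $x_k\le y_0$, and apply Lemma~\ref{l1} to the shifted sequence starting at $x_k$. The only difference is that you spell out the details (the shift preserving the recursion, the degenerate case $x_0=0$) that the paper leaves implicit.
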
 

\begin{proof} Since $0<x_{n+1}<x_n$, the sequence $x_n$ tends to some $x$ such that $f(x)=x$, so $x_n\to 0$. Thus for some $k$, $x_k \leqslant y_0$. So the statement follows from Lemma \ref{l1}.
\end{proof} 

Now we can use Corollary \ref{c2} to sandwich $x_{n}$ between two sequences which have known asymptotics. In fact, this works more generally.

\begin{thm}\label{t3} Let $\alpha>0$ and $f: [0,\varepsilon)\to \Bbb R$ be a continuous function such that
$$
f(x)=x-\alpha x^{r+1}+O\left(x^{2 r+1}\right),\ x \rightarrow 0.
$$

Let a sequence $\left\{x_{n}\right\}$ be defined by the recursion
$$
x_{n+1}=f\left(x_{n}\right)
$$
for small enough $x_{0}>0$.
Then for any $0<\delta<1$ we have
$$
x_n=Cn^{-b}(1+O(n^{-\delta})),\ n\to \infty,\ \text {\rm where } b=\frac{1}{r},\ C=(\alpha r)^{-\frac{1}{r}}.
$$
\end{thm} 

\begin{proof} If $y=x-\alpha x^{r+1}+O(x^{2 r+1})$ then 
$$
y^r=(x-\alpha x^{r+1}+O(x^{2 r+1}))^r=x^r-r\alpha x^{2r}+O(x^{3r}).
$$ 
Thus by change of variable $x\mapsto x^{r}$ we can arrange that $r=1$. So without loss of generality we may assume that 
$f(x)=x-\alpha x^{2}+O\left(x^{3}\right)$.
Furthermore, by change of variable $x\mapsto \alpha x$ we can make $\alpha=1$, so we may assume that 
$$
f(x)=x-x^{2}+O\left(x^{3}\right),\ x\to 0.
$$ 
Now we need to show that 
 $x_{n}=\frac{1}{n}(1+O(n^{-\delta}))$, $n\to \infty$.
To this end, fix $\delta<1$ close to $1$ and for $\lambda\in [-1,1]$ 
define the sequence $a_n=a_n(\lambda)$ by 
$$
a_{n}:=\frac{1}{n}\left(1+\lambda n^{-\delta}\right).
$$

We claim that there exists a unique germ\footnote{Let $X\subset \Bbb R$, $x_0\in X$, and $Y$ be a set. Say that two $Y$-valued functions $g_1$, $g_2$ defined on some neighborhoods of $x_0$ in $X$ are equivalent if $g_1(x)=g_2(x)$ for $x$ sufficiently close to $x_0$. An equivalence class in this sense is called a {\bf germ} of a $Y$-valued function on $X$ near $x_0$. We will slightly abuse terminology by also using the word ``germ" for a representative of such an equivalence class.} 
$g_\lambda$ of a continuously differentiable function on $\Bbb R_{\ge 0}$ near $0$ such that 
\begin{equation}\label{glam}
g_\lambda\left(\frac{1}{u}\left(1+\lambda u^{-\delta}\right)\right)=\frac{1}{u+1}\left(1+\lambda (u+1)^{-\delta}\right)
\end{equation}
for $u\gg 0$; in particular, $a_{n+1}=g_\lambda\left(a_{n}\right)$ for large $n$. 
Indeed, if we set $v:=u^{-1}$ then \eqref{glam} takes the form 
$$
g_\lambda\left(v\left(1+\lambda v^{\delta}\right)\right)=\tfrac{v}{1+v}\left(1+\lambda (\tfrac{v}{1+v})^{\delta}\right).
$$
As the function 
$v\mapsto v\left(1+\lambda v^{\delta}\right)$ is continuously differentiable for $v\ge 0$ and has derivative $1$ at $v=0$, the claim follows from the inverse function theorem.

Let us compute the beginning of the asymptotic expansion of $g_\lambda(x)$ as $x\to 0$.
We will work modulo $o(x^{2+\delta})$, omitting all negligible terms, and using that $\delta$ is sufficiently close to $1$.
Setting 
$x:=a_{n}$, 
we have
$$
n=\frac{1+\lambda n^{-\delta}}{x}=
\frac{1+\lambda x^\delta (1+\lambda n^{-\delta})^{-\delta}}{x}=
x^{-1}(1+\lambda x^{\delta}-\delta \lambda^{2} x^{2 \delta}+O(x^{3\delta})), 
$$
so setting $y:=a_{n+1}$, we have
$$
y=\frac{1+\lambda(n+1)^{-\delta}}{n+1}= 
 \frac{x}{1+\lambda x^{\delta}-\delta \lambda^{2} x^{2 \delta}+x+O(x^{3\delta})}\left(1+\frac{\lambda x^{\delta}}{\left(1+\lambda x^{\delta}+x+O(x^{2\delta})\right)^{\delta}}\right)  = 
 $$
 $$
  \frac{x}{1+\lambda x^{\delta}-\delta \lambda^{2} x^{2 \delta}+x+O(x^{3\delta})}\left(1+\lambda x^{\delta}-\delta\lambda^2 x^{2\delta}-\delta \lambda x^{1+\delta}+O(x^{3\delta})\right)=
 $$
 $$
  \frac{x}{1+\frac{x}{1+\lambda x^{\delta}-\delta \lambda^{2} x^{2 \delta}}+O(x^{3\delta})}\left(1-\frac{\delta \lambda x^{1+\delta}}{1+\lambda x^{\delta}-\delta \lambda^{2} x^{2 \delta}}+O(x^{3\delta})\right)=
 $$
 $$
 x\left(1-x+\lambda(1-\delta) x^{1+\delta}\right)+o(x^{2+\delta}),\ x\to 0,
$$
since $1+3\delta>2+\delta$. Thus 
$$
g_{\lambda}(x)=x-x^{2}+\lambda(1-\delta) x^{2+\delta}+o(x^{2+\delta}),\ x\to 0.
$$
Now we see that for small $x>0$ we have 
$$
g_{-1}(x) \leqslant f(x) \leqslant g_{1}(x).
$$ 
So by Corollary \ref{c2} we get that for some $k$ and every sufficiently large $n$
\begin{equation}\label{eqqq}
\frac{1}{n+k}\left(1-(n+k)^{-\delta}\right) \leqslant x_{n} \leqslant \frac{1}{n-k}\left(1+(n-k)^{-\delta}\right) 
\end{equation}
which implies the required statement. 
\end{proof} 

In particular, this applies to $f(x)=\sin x$ since 
$$\sin x=x-\frac{x^{3}}{6}+O\left(x^{5}\right),
$$
so we have now proved rigorously that in this case indeed 
$$
x_n\sim \sqrt{\frac{3}{n}}.
$$

\section{The next term of asymptotics} The next question is, what about a more precise asymptotics? I.e., how fast does $\sqrt{\frac{n}{3}} x_{n}$ approach $1$ (for $f(x)=\sin x$)?  By Theorem \ref{t3},
$\sqrt{\frac{n}{3}} x_{n}=1+O(n^{-\delta})$ for any $\delta<1$, so we may hope that $\sqrt{\frac{n}{3}} x_{n}$ has an expansion in powers of $\frac{1}{n}$, i.e. 
$$\quad x_{n}=\sqrt{\frac{3}{n}}\left(1+\frac{\gamma}{n}+o\left(\frac{1}{n}\right)\right), n \rightarrow \infty.
$$ 
So what is $\gamma$ ? We can take 
$$
a_{n}=\sqrt{\frac{3}{n}}\left(1+\frac{\gamma}{n}\right)
$$ 
and use the above
trick again -- find a function $g_{\gamma}$ such that 
$$
a_{n+1}=g_{\gamma}\left(a_{n}\right)
$$ 
and see for which $\gamma=\gamma_{0}$ it is the best approximation of $\sin x$; namely, we need $g_{\gamma}(x) \leqslant \sin x$ if $\gamma<\gamma_{0}$ and $g_{\gamma}(x) \geqslant \sin x$ if $\gamma>\gamma_{0}$. But we have a bad news: 
$\gamma_{0}=-\infty$, i.e., $g_{\gamma}(x) \geqslant \sin x$ for all $\gamma \in \mathbb{R}$.
So there is {\bf no asymptotic expansion that we'd hoped for!}

In fact what happens is that the next term of the asymptotics is 
$-\frac{3}{10} \frac{\log n}{n}.$ 
Namely, we have

\begin{prop}\label{p4} (\cite{B}, p.159) For $f(x)=\sin x$,
$$
x_{n}=\sqrt{\frac{3}{n}}\left(1-\frac{3}{10} \frac{\log n}{n}+O\left(\frac{1}{n}\right)\right),\ n \rightarrow \infty .
$$
\end{prop}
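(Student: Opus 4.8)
The plan is to linearize the recursion by passing to the reciprocal of $x_n^2$. Concretely, I would set
$$
w_n:=\frac{3}{x_n^2},
$$
so that the leading asymptotics of Theorem \ref{t3} ($x_n=\sqrt{3/n}\,(1+O(n^{-\delta}))$) becomes the statement $w_n=n+O(n^{1-\delta})$, and the claimed refinement becomes a precise estimate for the deviation $w_n-n$. The point of this substitution is that, whereas the multiplicative recursion $x_{n+1}=\sin x_n$ produces the awkward logarithm, the induced recursion for $w_n$ is \emph{additive} with step close to $1$, and its increments can simply be summed.

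To carry this out I would first expand $\sin^2 x=x^2-\tfrac13 x^4+\tfrac{2}{45}x^6+O(x^8)$ (the coefficient $\tfrac{2}{45}$ is the only new input beyond what Theorem \ref{t3} used). Substituting $x_n^2=3/w_n$ and inverting gives, after a short computation,
$$
w_{n+1}=w_n+1+\frac{3}{5\,w_n}+O\!\left(\frac{1}{w_n^2}\right),\qquad n\to\infty.
$$
Here the constant $\tfrac35$ is exactly the ``resonant'' invariant of the map: it is what survives after normalizing $\sin$ to the form $\eta\mapsto\eta-\eta^2+\tfrac25\eta^3+\cdots$ in the variable $\eta=x^2/3$, and it is precisely this nonzero coefficient that forces a logarithm rather than a pure power in the next order. (If this coefficient vanished, $w_n-n$ would stay bounded.)

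With the additive recursion in hand I would set $s_n:=w_n-n$ and telescope. Using $w_n=n+O(n^{1-\delta})$ from Theorem \ref{t3} to replace $w_n$ by $n$ in the correction terms, one gets
$$
s_{n+1}-s_n=\frac{3}{5n}+O\!\left(n^{-1-\delta}\right),
$$
whose right-hand side is summable up to the harmonic part, so that $\sum_{k} \tfrac{3}{5k}=\tfrac35\log n+O(1)$ and the remainder converges. Hence $w_n=n+\tfrac35\log n+O(1)$. Inverting, $x_n^2=3/w_n=\tfrac3n\bigl(1-\tfrac35\tfrac{\log n}{n}+O(1/n)\bigr)$, and taking the square root (which halves the log coefficient) yields
$$
x_n=\sqrt{\frac3n}\left(1-\frac{3}{10}\frac{\log n}{n}+O\!\left(\frac1n\right)\right),
$$
as claimed; the coefficient $-\tfrac{3}{10}=-\tfrac12\cdot\tfrac35$ thus traces directly back to the $x^6$-coefficient of $\sin^2$.

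The main obstacle I anticipate is not the algebra but making the error terms genuinely uniform and summable: I must justify that the $O(1/w_n^2)$ in the recursion is a bona fide bound valid once $x_n$ is small (which holds for all large $n$ since $x_n\to0$), and that substituting $w_n=n+O(n^{1-\delta})$ into the $1/w_n$ term produces an error of size $O(n^{-1-\delta})$ that sums to a convergent series. The delicacy is that the logarithmic main term comes from the borderline-divergent harmonic sum, so any sloppiness in the lower-order terms could corrupt the constant; keeping $\delta$ fixed close to $1$ (as supplied by Theorem \ref{t3}) guarantees $n^{-1-\delta}$ is summable and isolates $\tfrac35\log n$ cleanly. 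Alternatively, the same conclusion can be packaged as a sandwich between the sequences generated by germs attached to $a_n^\pm=\sqrt{3/n}\bigl(1-\tfrac{3}{10}\tfrac{\log n}{n}\pm\tfrac{K}{n}\bigr)$ via Corollary \ref{c2}, matching the method used for Theorem \ref{t3}.
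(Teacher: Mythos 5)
Your argument is correct, and it is genuinely different from the route the paper takes: the paper does not prove Proposition \ref{p4} directly but presents it as the special case $r=2$, $\alpha=\tfrac16$, $\beta=\tfrac1{120}$ of Theorem \ref{t5}, which in turn is to be proved (per the Exercise) by the same sandwich machinery as Theorem \ref{t3} --- constructing comparison germs $h_\beta$ from $b(u)=\tfrac1u\bigl(1+\tfrac{\beta}{u}\log u\bigr)$ and squeezing via Corollary \ref{c2}. You instead linearize: the substitution $w_n=3/x_n^2$ turns the iteration into an almost-additive recursion $w_{n+1}=w_n+1+\tfrac{3}{5w_n}+O(w_n^{-2})$ whose increments telescope, with the harmonic sum producing the $\tfrac35\log n$ and hence the $-\tfrac{3}{10}$ after taking the square root. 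Your expansions check out ($\sin^2x=x^2-\tfrac13x^4+\tfrac2{45}x^6+O(x^8)$, the normalized cubic coefficient $\tfrac25$, and the resulting log coefficient $\tfrac25-1=-\tfrac35$ agree with Theorem \ref{t5}), and the error control is sound: the $O(w_n^{-2})=O(n^{-2})$ term is summable, and feeding $w_n=n+O(n^{1-\delta})$ from Theorem \ref{t3} into $\tfrac{3}{5w_n}$ gives a summable $O(n^{-1-\delta})$ correction, so the constant in front of $\log n$ is not corrupted. This is essentially de Bruijn's original argument --- the very source the paper cites for the proposition --- and what it buys is directness: no inverse function theorem, no construction of comparison germs, no monotonicity hypotheses, and a transparent explanation of where the logarithm comes from (a borderline harmonic sum). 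What the paper's sandwich approach buys is uniformity of method: it reuses Lemma \ref{l1} and Corollary \ref{c2} verbatim and fits the conceptual frame of the later sections, where the ``resonant'' coefficient $\beta$ appears as a conjugacy invariant; your closing remark that the argument can be repackaged as a sandwich between germs attached to $a_n^{\pm}$ shows you see how the two proofs connect.
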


One can compute a deeper expansion, but the next term and many (although not all) lower order terms depend on $x_{0}$. The full asymptotics is computed in \cite{BR}.

This is a special case of the following more general theorem, which is proved by a method similar to the above. Note that we need the next Taylor coefficient!

\begin{thm}\label{t5} (see e.g. \cite[Corollary 3.4]{BN})\footnote{In the journal version of \cite[Corollary 3.4]{BN},  there is a misprint: in formula (25), $\ell$ in the denominator of the fraction inside parentheses should be replaced by $\ell^2$. It is corrected in the arXiv version. Also the error term in \cite{BN} is a bit coarser, namely $o(\frac{\log n}{n})$, but with a bit more work the same method yields the finer bound $O(\frac{1}{n})$.} If 
$$
f(x)=x-\alpha x^{r+1}+\beta x^{2 r+1}+\cdots,
$$ 
$\alpha>0$, $\beta \in \mathbb{R}$, then the sequence $x_{n}$ defined by $x_{n+1}=f\left(x_{n}\right)$ with small enough $x_{0}>0$
satisfies the asymptotics
$$
x_{n}=(\alpha r n)^{-\frac{1}{r}}\left(1+\left(\frac{\beta}{\alpha^{2}}-\frac{r+1}{2}\right) \frac{1}{r^{2}} \cdot \frac{\log n}{n}+O\left(\frac{1}{n}\right)\right),\ n\to \infty.
$$
\end{thm}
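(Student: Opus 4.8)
The plan is to mimic the proof of Theorem \ref{t3}, but carried one Taylor order further; this is exactly why the coefficient $\beta$ (``the next Taylor coefficient'') must enter. First I would normalize as in Theorem \ref{t3}, now also tracking the cubic term. The substitution $x\mapsto x^r$ gives $y^r=x^r-r\alpha x^{2r}+\big(r\beta+\tfrac{r(r-1)}2\alpha^2\big)x^{3r}+\cdots$, so after $X=x^r$ the new coefficients are $\tilde\alpha=r\alpha$, $\tilde\beta=r\beta+\tfrac{r(r-1)}2\alpha^2$; the scaling $X\mapsto\tilde\alpha X$ then makes the quadratic coefficient $1$ and turns the cubic coefficient into the invariant $b:=\tilde\beta/\tilde\alpha^2=\tfrac1r\big(\tfrac{\beta}{\alpha^2}+\tfrac{r-1}2\big)$. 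Hence it suffices to treat $f(x)=x-x^2+bx^3+O(x^4)$ and prove $x_n=\tfrac1n\big(1+(b-1)\tfrac{\log n}{n}+O(\tfrac1n)\big)$. Unwinding the two substitutions multiplies the $\tfrac{\log n}{n}$-coefficient by $\tfrac1r$, and the identity $\tfrac{b-1}{r}=\tfrac1{r^2}\big(\tfrac{\beta}{\alpha^2}-\tfrac{r+1}2\big)$ (together with the prefactor $(r\alpha n)^{-1/r}$) reproduces the claimed formula.

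Next, following the germ trick of Theorem \ref{t3}, I would fix $\gamma\in\R$, set $a_n(\gamma):=\tfrac1n(1+\gamma\tfrac{\log n}{n})$, and use the inverse function theorem to produce a $C^1$ germ $g_\gamma$ with $a_{n+1}=g_\gamma(a_n)$. The heart of the matter is the expansion of $g_\gamma$ near $0$, which is cleanest in the coordinate $u=1/x$: the true map satisfies $\tfrac{1}{f(1/u)}-u=1+\tfrac{1-b}{u}+O(u^{-2})$, while the ansatz gives $\tfrac1{a_{n+1}}-\tfrac1{a_n}=1-\tfrac\gamma n+\cdots=1-\gamma a_n+\cdots$. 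Matching the $1/u$ (equivalently the linear-in-$x$) terms forces $\gamma=b-1$; equivalently $g_\gamma(x)=x-x^2+(1+\gamma)x^3+\cdots$, whose cubic coefficient equals $b$ precisely when $\gamma=b-1$. For $\gamma_-<b-1<\gamma_+$ the cubic coefficients $1+\gamma_\pm$ straddle $b$, so $g_{\gamma_-}(x)<f(x)<g_{\gamma_+}(x)$ for small $x>0$, and Corollary \ref{c2} sandwiches
$$\tfrac{1}{n+k'}\big(1+\gamma_-\tfrac{\log(n+k')}{n+k'}\big)\le x_n\le\tfrac{1}{n-k}\big(1+\gamma_+\tfrac{\log(n-k)}{n-k}\big).$$
Letting $\gamma_\pm\to b-1$ pins the coefficient of $\tfrac{\log n}{n}$ at $b-1$, with error $o(\tfrac{\log n}{n})$ — this already recovers the version in \cite{BN}.

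The main obstacle — and the reason for the footnote's ``bit more work'' — is sharpening $o(\tfrac{\log n}{n})$ to $O(\tfrac1n)$. Here I would enrich the ansatz to $a_n(\nu)=\tfrac1n\big(1+(b-1)\tfrac{\log n}{n}+\nu\tfrac{(\log n)^2}{n^2}\big)$, keeping $\gamma=b-1$ fixed. In the coordinate $u=1/x$ the true increment $\tfrac1{f(1/u)}-u$ is analytic in $1/u$ and hence carries \emph{no} $\tfrac{(\log u)^2}{u^2}$ term, whereas the increment of $a_n(\nu)$ does; tracking it yields the leading discrepancy
$$g_\nu(x)-f(x)=\big((b-1)^2-\nu\big)\,x^4(\log x)^2+o\big(x^4(\log x)^2\big),$$
whose sign near $0$ is that of $(b-1)^2-\nu$ (since $(\log x)^2>0$). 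Choosing $\nu_+<(b-1)^2<\nu_-$ therefore gives $g_{\nu_-}\le f\le g_{\nu_+}$ near $0$, and since the $\nu$-term contributes only $O(\tfrac1n)$ inside the parentheses, Corollary \ref{c2} squeezes $x_n$ between two sequences each of the form $\tfrac1n\big(1+(b-1)\tfrac{\log n}{n}+O(\tfrac1n)\big)$. Unwinding the normalization then gives the theorem.

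The genuinely delicate point throughout is the bookkeeping of the iterated logarithmic terms in the germ expansion. Once one verifies the two structural facts — that $g_\gamma$ is log-free through order $x^3$ so that the cubic coefficient alone determines $\gamma$, and that $f$'s increment is log-free at order $u^{-2}$ while the ansatz's is not, so that $\nu$ controls the sign of the first surviving discrepancy — the sign-controlled straddles and the two applications of Corollary \ref{c2} are routine.
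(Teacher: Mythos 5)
Your strategy is exactly the one the paper has in mind (the paper does not prove this theorem itself: it cites \cite{BN} and leaves the proof as an exercise with precisely your ansatz $\tfrac1u(1+\tfrac{\gamma}{u}\log u)$), and the first two thirds of your argument are correct: the normalization algebra, the identification $\gamma=b-1$ with $b=\tfrac1r(\tfrac{\beta}{\alpha^2}+\tfrac{r-1}{2})$, the unwinding identity $\tfrac{b-1}{r}=\tfrac1{r^2}(\tfrac{\beta}{\alpha^2}-\tfrac{r+1}{2})$, and the straddle $\gamma_-<b-1<\gamma_+$ giving the coarse error $o(\tfrac{\log n}{n})$ all check out.

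There is, however, a genuine gap in the reduction step that breaks the $O(\tfrac1n)$ refinement when $r\ge 2$. The substitution $X=x^r$ does \emph{not} reduce matters to $f(X)=X-X^2+bX^3+O(X^4)$. If $f$ has a nonzero coefficient $\delta$ of $x^{2r+2}$ (which the hypothesis ``$+\cdots$'' permits), then $f(x)^r=x^r-r\alpha x^{2r}+\tilde\beta x^{3r}+r\delta x^{3r+1}+\cdots$, so the reduced map is $X-\tilde\alpha X^2+\tilde\beta X^3+cX^{3+1/r}+\cdots$ with $c\ne0$: the error is only $O(X^{3+1/r})$, and $X^{3+1/r}\gg X^4(\log X)^2$ as $X\to0^+$. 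Consequently your key identity $g_\nu(X)-f(X)=\bigl((b-1)^2-\nu\bigr)X^4(\log X)^2+o\bigl(X^4(\log X)^2\bigr)$ fails: the difference is asymptotic to $-cX^{3+1/r}$, whose sign is independent of $\nu$, so the whole family $g_\nu$ lies on one side of $f$ and the sandwich yields only a one-sided bound. (For $r=1$ the error really is $O(X^4)=o(X^4(\log X)^2)$ and your proof is complete; for $\sin x$ one is rescued by parity, since the $x^6$ coefficient vanishes.) The theorem itself survives — such fractional-power terms have summable reciprocal-square contributions and only move the $O(\tfrac1n)$ term — but your proof must absorb them. This is presumably the footnote's ``bit more work'': enrich the ansatz to $a_n=\tfrac1n\bigl(1+(b-1)\tfrac{\log n}{n}+\sum_{j=1}^{r-1}\kappa_j n^{-j/r-1}+\nu\tfrac{(\log n)^2}{n^2}\bigr)$, with the $\kappa_j$ chosen to match the $u^{-1-j/r}$ terms in the increment of $u\mapsto 1/f(1/u)$; each new term contributes only $O(\tfrac1n)$ inside the parentheses, after which your sign analysis of the $(\log u)^2/u^2$ discrepancy and the two applications of Corollary \ref{c2} go through as you describe.
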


\begin{example} For 
$$
f(x)=\sin x=x-\frac{x^{3}}{6}+\frac{x^{5}}{120}+...
$$
we have 
$$
r=2,\ \alpha=\frac{1}{6},\ \beta=\frac{1}{120},
$$
so we get 
$$
x_{n}=\sqrt{\frac{3}{n}}\left(1-\frac{3}{10} \frac{\log n}{n}+O\left(\frac{1}{n}\right)\right),\ 
n \rightarrow \infty,
$$ 
as claimed in Proposition \ref{p4}.
\end{example} 

\begin{exercise} Find the expansion near $x=0$ 
of the function $h_\beta(x)$ for which the function $b(u):=\frac{1}{u}(1+\frac{\beta}{u}\log u)$ satisfies the equation $b(u+1)=h_\beta(b(u))$ for $u\gg 0$. 
Then prove Theorem \ref{t5} by mimicking the proof of Theorem \ref{t3}. 
\end{exercise} 

\section{Asymptotics and conjugacy classes} So what happens when we iterate a general function $f(x)$ with $f(0)=0$, $f'(0)=1$, $f(x)<x$ for $x>0$? To understand this, we need to think about the problem more algebraically.

Let $\bold G$ be the group of germs of smooth changes of variable $x\mapsto f(x)$ for small $x\ge 0$ (so $f(0)=0$), with $f'(0)=1$ and the group operation being the composition, $(f\circ h)(x)=f(h(x))$. Let $f_1,f_2\in \bold G$.
Suppose that there exists $g\in \bold G$ such that 
$$
f_{2}=g^{-1} \circ f_{1} \circ g.
$$
Then if $\left\{x_{n}\right\}$ solves the recursion $x_{n+1}=f_{2}\left(x_{n}\right)$ then $\left\{g\left(x_{n}\right)\right\}$ solves the recursion $y_{n+1}=f_{1}\left(y_{n}\right)$, and vice versa. So the asymptotics of iterations of $f_{1}$ determines that of $f_{2}$ and vice versa. Thus our problem reduces to classification of conjugacy classes in $\bold G$. 

Let us say that an element $f\in \bold G$ is {\it flat} if 
its Taylor series coincides with that of the identity (which is just $x$), i.e., if for any $j\ge 2$ one has $f^{(j)}(0)=0$; in this case, the graph of $f$ near $0$ is very close to the line $y=x$, hence the terminology. It is clear that flat elements form a normal subgroup $\bold G_{\rm flat}\subset \bold G$. The classification of non-flat conjugacy classes in $\bold G$ is given by the following theorem
of Takens (1973). 

\begin{thm}\label{t6} (\cite{T}, Theorem 2) Non-flat conjugacy classes in $\bold G$ are represented by elements 
$$
f_{r,\alpha,\beta}(x)=x-\alpha x^{r+1}+\beta x^{2 r+1}, \quad \alpha, \beta \in \mathbb{R},\ \alpha \neq 0,\ r\in \Bbb Z_{>0},
$$
and $r, \alpha, \beta$ are uniquely determined.
\end{thm}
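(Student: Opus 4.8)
The plan is to separate the statement into a \emph{formal} part—classification up to conjugacy by formal power series—and a \emph{smooth} part, in which a formal conjugacy is upgraded to a genuine smooth one. The second part is where the real work lies.

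For the formal classification I would pass to the Lie algebra of formal vector fields $w=c(x)\partial_x$ with $c(x)=O(x^2)$ and exploit the adjoint action: writing a non-flat germ as $f=\exp(v)$ with formal generator $v=-\alpha x^{r+1}\partial_x+\cdots$, conjugation by $\exp(w)$ replaces $v$ by $v-[w,v]+\cdots$. The decisive computation is
$$
[x^m\partial_x,\,x^{r+1}\partial_x]=(r+1-m)\,x^{m+r}\partial_x ,
$$
so conjugating by a generator of degree $m\ge 2$ alters the degree-$(m+r)$ coefficient of $v$ by a nonzero multiple of $c$ \emph{unless} $m=r+1$, i.e.\ $m+r=2r+1$. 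Proceeding upward in degree, one kills every coefficient of $v$ except the resonant one at degree $2r+1$. Since the constraint $g'(0)=1$ forces $m\ge 2$, the leading coefficient $\alpha$ is never touched; thus $r$, $\alpha$, and the surviving residue at degree $2r+1$ are conjugacy invariants. Exponentiating the normalized generator and absorbing its non-resonant higher-order terms by one more formal change of variable, one checks that the polynomial $f_{r,\alpha,\beta}$ is a valid representative and that $\beta$ is an invertible affine function of the residue; this yields existence of a formal normal form and uniqueness of $(r,\alpha,\beta)$. (These remain invariants under smooth conjugacy as well, since taking Taylor series turns a smooth conjugacy into a formal one.)

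To descend to smooth germs I would invoke Borel's theorem: the formal conjugacy is the Taylor series of some smooth $\widehat g\in\mathbf G$, and conjugating by $\widehat g$ reduces matters to the heart of the theorem, namely that a smooth germ $F$ with $F-f_{r,\alpha,\beta}$ \emph{flat} is smoothly conjugate to $\Phi:=f_{r,\alpha,\beta}$. I would establish this by the homotopy (path) method: join $F$ to $\Phi$ through a family $F_t$ with $F_t-\Phi$ flat for all $t$, and seek a smooth family $h_t$ conjugating $F_t$ to $\Phi$, starting from the identity at the end where $F_t=\Phi$. Differentiating the relation $h_t^{-1}\circ F_t\circ h_t=\Phi$ in $t$ produces, for the infinitesimal generator $\xi_t$ of $h_t$, a cohomological equation for the $\mathbb Z$-action generated by the contraction $\Phi$,
$$
L_\Phi\,\xi_t:=\xi_t\circ\Phi-\Phi'\cdot\xi_t=\dot F_t .
$$

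The main obstacle is solving this equation smoothly, and two features of the setup conspire to make it possible. First, $\dot F_t$ is flat, so the finitely many formal obstructions—living precisely in the normal-form directions analyzed above—vanish identically. Second, although the fixed point $x=0$ is \emph{parabolic} rather than hyperbolic, orbits still converge, at the polynomial rate $x_n\sim(\alpha r n)^{-1/r}$ furnished by Theorem~\ref{t3}; a flat function decays faster than any power of $x_n$, whereas the cocycle $(\Phi^{n})'$ decays only polynomially. Hence the telescoping candidate $\xi_t=-\sum_{n\ge 0}(\dot F_t\circ\Phi^{n})/(\Phi^{n+1})'$, obtained by transporting $\dot F_t$ along the forward orbit, converges in every $C^k$-norm and is itself flat. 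Integrating $\xi_t$ in $t$ yields the sought conjugacy. The delicate points are the uniform-in-$t$ convergence of this series at the degenerate fixed point and the flatness of $\xi_t$ (so that $h_t$ remains in $\mathbf G$); obtaining these estimates at a parabolic fixed point is the technical core of Takens's argument.
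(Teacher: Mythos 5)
Your proposal and the paper part ways in an instructive fashion, because the paper does not actually prove Theorem \ref{t6} in full: the proof environment headed ``of Theorem \ref{t6}'' carries out only the \emph{formal} successive-approximation argument (conjugating by $g(x)=x+cx^{s}$ shifts the degree-$(r+s)$ coefficient by $-c\alpha(r+1-s)$, so every term except the resonant one at $s=r+1$ can be killed) and explicitly concludes ``This proves Theorem \ref{t7}''; the smooth statement is then deferred to \cite{T}, with a remark that one must organize the approximations to converge in the $C^\infty$ sense. Your formal step is the Lie-algebra avatar of exactly that computation --- your bracket coefficient $r+1-m$ is the paper's $r+1-s$ --- so it buys nothing essentially new, at the cost of having to justify writing $f=\exp(v)$ for a formal field $v$; and, like the paper, you assert rather than prove that the degree-$(2r+1)$ residue is invariant under conjugation by the \emph{whole} group and not merely under the degree-by-degree normalizers. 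Where you genuinely diverge is in attempting the smooth half: Borel's theorem reduces to a flat perturbation of $f_{r,\alpha,\beta}$ (this much is the paper's Theorem \ref{t6a}), and the path method then turns the problem into the twisted cohomological equation $\xi\circ\Phi-\Phi'\cdot\xi=\eta$ with $\eta$ flat, solved by summing along the contracting orbit. That is the right strategy (essentially the one in \cite{BN}, and equivalent to Takens's), and your observation that flatness of $\eta$ beats the merely polynomial decay of $(\Phi^{n})'$ at the parabolic fixed point is the correct reason the series has a chance to converge.

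Be clear-eyed, however, that as written this is an outline of the hard half, not a proof of it. Everything you label ``delicate'' is precisely the content of Theorem \ref{t6} beyond Theorem \ref{t6a}: uniform $C^{k}$ bounds on the orbital sum (the factors $(\Phi^{n+1})'(x)^{-1}$ grow polynomially in $n$, and each $x$-differentiation of the series worsens this by further powers, so one needs quantitative flatness of $\eta$ along the orbit, uniformly in $x$ and $t$); flatness of $\xi_t$ at $0$ so that $h_t$ remains in $\mathbf G$; and integration of the time-dependent field $\xi_t$ to a family of \emph{germs} defined on a $t$-independent neighborhood of $0$, which does not come for free. Also mind the sign of $\alpha$: forward iteration contracts to $0$ on $[0,\varepsilon)$ only when $\alpha>0$, so for $\alpha<0$ the sum must run along the backward orbit. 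None of this is fatal --- it is all carried out in \cite{T} and \cite{BN} --- but until those estimates are supplied your argument establishes no more than the paper itself does, namely Theorems \ref{t7} and \ref{t6a}.
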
 

Theorem \ref{t6} implies that the asymptotics of $x_{n}$ for any non-flat\footnote{The case of flat $f$ is more complicated and we will not discuss it.}  $f$ is determined by Theorem \ref{t5}.

In fact, Theorem \ref{t6} has an algebraic version which is valid over any field $\bold k$ of characteristic zero. This version concerns the group $G(\bold k)$
of {\bf formal} changes of variable
$$
f(x)=x+a_{2} x^{2}+a_{3} x^{3}+\cdots,\ a_j\in \bold k 
$$ 
(i.e., with no convergence conditions). 

\begin{thm}\label{t7} (N. Venkov, see \cite{V}, Theorem 1.1) 
Nontrivial conjugacy classes in $G(\bold k)$ are represented by elements 
$f_{r,\alpha,\beta}$ where $\alpha,\beta\in \bold k,\alpha\ne 0$, 
$r\in \Bbb Z_{>0}$, where $r, \alpha, \beta$ are uniquely determined.\footnote{Note that the classification of conjugacy classes of germs of (real or complex) {\it analytic} changes of variable $x\mapsto f(x)$ with $f(0)=0$, $f'(0)=1$ is much more intricate, 
and they are parametrized by infinitely many parameters, see \cite{E},\cite{V}.}
\end{thm}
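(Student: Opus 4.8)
The plan is to split Theorem \ref{t7} into an \emph{existence} part (every nontrivial $f\in G(\bold k)$ is conjugate to some $f_{r,\alpha,\beta}$) and a \emph{uniqueness} part ($r,\alpha,\beta$ are determined by the conjugacy class). Write a nontrivial $f$ as $f(x)=x-\alpha x^{r+1}+\cdots$ with $\alpha\ne 0$ and $r\ge 1$. The integer $r$ is visibly a conjugation invariant, being one less than the order of vanishing of $f(x)-x$. To handle $\alpha$ and to organize the normalization, I would linearize conjugation: for $\phi(x)=x^k$ the effect of conjugating $f$ by $\mathrm{id}+\varepsilon\phi$ is, to first order in $\varepsilon$, the cohomological operator $\delta f=\phi\cdot f'-\phi\circ f$. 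A direct computation gives $\delta f=(k-r-1)\alpha\, x^{k+r}+(\text{higher order})$. Hence conjugation by tangent-to-identity germs cannot alter the coefficient of $x^{r+1}$, so $\alpha$ is an invariant, and the coefficient of $x^{k+r}$ can be shifted by an arbitrary amount precisely when $k\ne r+1$, i.e.\ at every degree other than $2r+1$.

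This yields existence of the normal form by a triangular induction on degree: starting from the lowest unwanted coefficient and proceeding upward, conjugate successively by $\mathrm{id}+c_k x^k$, choosing $c_k$ to annihilate the coefficient of $x^{k+r}$ for each $k\ne r+1$. Since $\delta f$ begins in degree $k+r$, each step leaves all lower-degree coefficients untouched, so the infinite composition converges in the Krull (coefficientwise) topology and brings $f$ to $x-\alpha x^{r+1}+\beta x^{2r+1}=f_{r,\alpha,\beta}$ for some $\beta\in\bold k$. The single coefficient the procedure cannot reach is the one in degree $2r+1$, where the eigenvalue $(k-r-1)\alpha$ vanishes; this is exactly why $\beta$ survives.

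For uniqueness I would construct a true conjugation invariant that detects $\beta$. Because $\mathrm{char}\,\bold k=0$, each $f\ne\mathrm{id}$ has a unique formal infinitesimal generator, a vector field $v(x)\partial_x$ with $v(x)=-\alpha x^{r+1}+\cdots$ and $f=\exp(v\partial_x)$, whose coefficients are determined recursively (the divisions by integers force characteristic zero). Conjugating $f$ by a coordinate change $g$ replaces $v\partial_x$ by its pullback $g^{*}(v\partial_x)$, so conjugacy classes of $f$ correspond to orbits of order-$(r+1)$ vector fields under coordinate change. The associated time form $\omega=dx/v(x)$ is invariant under coordinate change (its pullback is itself) and is a formal $1$-form with a pole of order $r+1$; its leading polar coefficient, which equals $-1/\alpha$ and so recovers $\alpha$, and its residue $\rho=\mathrm{Res}_0\,\omega$ are invariant under tangent-to-identity changes, while all remaining coefficients can be eliminated. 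Thus $(r,\alpha,\rho)$ is a complete invariant.

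It remains to match $\rho$ with $\beta$. Computing the generator and the time form of the polynomial representative $f_{r,\alpha,\beta}$ shows that $\rho$ is an affine function of $\beta$ with nonzero leading coefficient; for instance when $r=1$ one finds $v=-\alpha x^{2}+(\beta-\alpha^{2})x^{3}+\cdots$, whence $\rho=1-\beta/\alpha^{2}$. Therefore distinct values of $\beta$ give distinct residues and hence non-conjugate germs, while equal triples give conjugate ones; combined with the existence part this shows every nontrivial class is represented by exactly one $f_{r,\alpha,\beta}$. The main obstacle is the uniqueness half: the normal form itself is reached by a mechanical triangular elimination, but showing that $\beta$ genuinely cannot be removed requires producing the residue invariant through the formal logarithm, and it is here that characteristic zero is indispensable.
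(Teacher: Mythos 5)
Your proposal is correct, and it splits into two halves that relate differently to the paper's argument. The existence half is essentially the paper's proof: your linearized operator $\delta f=\phi f'-\phi\circ f$ with $\phi=x^k$, giving the leading term $(k-r-1)\alpha x^{k+r}$, is exactly the paper's computation that conjugation by $x+cx^{s}$ shifts the degree-$(r+s)$ coefficient by $-c\alpha(r+1-s)$, followed by the same triangular elimination of all degrees except $2r+1$. The uniqueness half is where you genuinely diverge. The paper simply asserts that the $s=r+1$ term ``cannot be changed'' because the eigenvalue vanishes there; strictly speaking this only shows invariance of $\beta$ under the particular elementary conjugations used in the elimination, and upgrading it to invariance under an arbitrary $g\in G(\bold k)$ requires an extra leading-term argument on $g$ that the paper leaves implicit. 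You instead manufacture an honest conjugation invariant: the unique formal infinitesimal generator $v\partial_x$ with $f=\exp(v\partial_x)$, the time form $\omega=dx/v$, its leading polar coefficient $-1/\alpha$, and its residue $\rho$ (your check $v=-\alpha x^2+(\beta-\alpha^2)x^3+\cdots$ and $\rho=1-\beta/\alpha^2$ for $r=1$ is right, and $\rho$ is indeed affine in $\beta$ with slope $-1/\alpha^2$ in general). This buys a cleaner and more airtight uniqueness proof, at the cost of heavier machinery: the formal logarithm and the residue's coordinate-invariance both require dividing by arbitrary integers, so your route does not survive truncation to characteristic $p$, whereas the paper's elementary elimination is precisely what carries over to give Theorem \ref{t7a}.
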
 

Note that by \'E. Borel's lemma (1895; see \cite{GG}, Lemma IV.2.5), any power series with real coefficients is the Taylor series of a smooth function. Thus the Taylor expansion map defines a group isomorphism $\bold G/\bold G_{\rm flat}\cong G(\Bbb R)$. So Theorem \ref{t7} over $\bold k=\Bbb R$ is a corollary of Theorem \ref{t6} (which is however used in its proof in \cite{T}, so must be proved independently). For the same reason Theorem \ref{t7} and Borel's lemma imply the following weaker version of Theorem \ref{t6}:

\begin{theorem}\label{t6a} A non-flat element $f$ of $\bold G$ can be brought by conjugation to the normal form $x-\alpha x^{r+1}+\beta x^{2 r+1}+\varepsilon(x)$, where all the derivatives of $\varepsilon(x)$ at $x=0$ vanish. 
\end{theorem}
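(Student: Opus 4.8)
The plan is to deduce this directly from Venkov's theorem (Theorem \ref{t7}) over $\bold k=\Bbb R$ together with Borel's lemma, exactly as the surrounding discussion indicates, by transporting a conjugacy that takes place at the level of formal power series up to the level of smooth germs. The structural fact I would rely on is that the Taylor expansion map $T\colon \bold G\to G(\Bbb R)$, sending a smooth germ to its Taylor series at $0$, is a group homomorphism with kernel $\bold G_{\rm flat}$. That composition of germs corresponds to formal composition of series is just the chain rule applied to all orders (Fa\`a di Bruno's formula), and surjectivity of $T$ is precisely Borel's lemma; both facts are already asserted in the excerpt in the form of the isomorphism $\bold G/\bold G_{\rm flat}\cong G(\Bbb R)$.

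First I would observe that a non-flat $f\in\bold G$ has nontrivial image $T(f)\in G(\Bbb R)$, since ``non-flat'' means exactly $T(f)\ne x$. By Theorem \ref{t7} there then exist a formal series $\bar g\in G(\Bbb R)$ and uniquely determined data $r\in\Bbb Z_{>0}$, $\alpha,\beta\in\Bbb R$ with $\alpha\ne 0$ such that $\bar g^{-1}\circ T(f)\circ\bar g=f_{r,\alpha,\beta}$ as formal series. Next I would lift $\bar g$ to a smooth germ: since $\bar g$ has the form $x+b_2x^2+\cdots$, in particular with vanishing constant term and linear coefficient $1$, Borel's lemma produces a smooth germ $g$ with $T(g)=\bar g$ and with $g(0)=0$, $g'(0)=1$, so $g\in\bold G$. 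The inverse function theorem gives a smooth local inverse $g^{-1}\in\bold G$, and because $T$ is a homomorphism we have $T(g^{-1})=\bar g^{-1}$.

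Setting $h:=g^{-1}\circ f\circ g\in\bold G$ and applying $T$ then yields
$$
T(h)=T(g)^{-1}\circ T(f)\circ T(g)=\bar g^{-1}\circ T(f)\circ\bar g=f_{r,\alpha,\beta}.
$$
Thus the germ $h$ and the polynomial $x-\alpha x^{r+1}+\beta x^{2r+1}$ share the same Taylor series at $0$, so their difference $\varepsilon(x):=h(x)-\left(x-\alpha x^{r+1}+\beta x^{2r+1}\right)$ lies in $\bold G_{\rm flat}$, i.e. all derivatives of $\varepsilon$ at $0$ vanish. This is exactly the claimed normal form for $h$, the conjugate of $f$ by $g$.

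Since all the genuine mathematical content is already packaged into Theorem \ref{t7} and Borel's lemma, I do not expect a substantial obstacle here; the one point requiring care is the homomorphism property of $T$ together with its compatibility with inversion, which is what permits the \emph{formal} conjugacy to be realized by an \emph{honest} smooth germ $g$ up to a flat error. I would stress that no uniqueness of $r,\alpha,\beta$ needs to be established in this argument—that is inherited wholesale from Theorem \ref{t7}—so the proof is purely one of lifting, and correspondingly short.
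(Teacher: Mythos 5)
Your argument is correct and is exactly the route the paper takes: Theorem \ref{t7} over $\Bbb R$ combined with Borel's lemma, transported through the isomorphism $\bold G/\bold G_{\rm flat}\cong G(\Bbb R)$ (the paper asserts this implication in a single sentence just before the statement, and you have simply supplied the lifting details). One cosmetic slip: the difference $\varepsilon = h - f_{r,\alpha,\beta}$ is a flat \emph{function} (all derivatives vanish at $0$) rather than an element of $\bold G_{\rm flat}$, whose members are germs with Taylor series $x$; the conclusion you draw from it is nonetheless the right one.
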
 

This weaker form is however sufficient for computing the asymptotics of 
the sequence $x_n$ defined by $x_{n+1}=f(x_n)$. 

\begin{proof} (of Theorem \ref{t6}) The element $g$ conjugating $f$ to the normal form $f_{r,\alpha,\beta}$ is constructed by the method of successive approximations, as follows. We have 
$$
f(x)=x-\alpha x^{r+1}+O(x^{r+2})
$$
for some $\alpha\ne 0$ and $r$. It is easy to see that $r,\alpha$ cannot be changed by conjugation. 
But let us attempt to use conjugations to inductively kill the higher coefficients (of degrees $\ge r+2$). So take
$$
h(x)=x-\alpha x^{r+1}+\beta x^{r+s}+\cdots, \quad s>1,\ g(x)=x+c x^{s}. 
$$
Then $g^{-1}(x)=x-cx^s+...$, so
\scriptsize
\begin{equation}\label{esste}
g^{-1} \circ h \circ g(x)=x+c x^{s}-\alpha\left(x+c x^{s}\right)^{r+1}+\beta\left(x+c x^{s}\right)^{r+s} 
 -c\left(x+c x^{s}-\alpha\left(x+c x^{s}\right)^{r+1}+\beta\left(x+c x^{s}\right)^{r+s}\right)^{s}+...
\end{equation}
\normalsize

Note that except the first term $x$, all the terms in this series must have a factor $\alpha$ or $\beta$ (as if $\alpha=\beta=0$ then $g^{-1}\circ h\circ g=h=x$). 
Thus \eqref{esste} contains all the relevant terms to compute 
the expansion of $g^{-1}\circ h\circ g$ modulo $x^{r+s+1}$: 
$$
g^{-1} \circ h \circ g(x)=x-\alpha x^{r+1}+(\beta-c \alpha(r+1-s)) x^{r+s}+\cdots
$$
So if $s \neq r+1$, we can kill the term of degree $s+r$ by letting $\beta=c \alpha(r+1-s)$, i.e. $c=\frac{\beta}{\alpha(r+1-s)}$. 
However, once we have killed all the terms of $f$ with $s\le r$, the term $\beta x^{2r+1}$ corresponding to $s=r+1$ cannot be changed, it is a conjugacy invariant of $f$. 
On the other hand, we can kill all terms with $s>r+1$. This proves Theorem \ref{t7}. 
\end{proof}

{\bf Remark.} To prove Theorem \ref{t6}, one needs to carefully organize the successive approximation process in the proof of Theorem \ref{t6a} so that it converges in the $C^\infty$ sense near $0$. So Theorem \ref{t6} is significantly deeper than Theorem \ref{t6a}. 

\begin{exercise} Let $f(x)=x-x^3+x^4$. Find the first two terms of the asymptotics of the sequence $x_n$ 
such that $x_{n+1}=f(x_n)$ (with sufficiently small $x_0>0$). 
To this end, find $r,\alpha,\beta$ such that $f_{r,\alpha,\beta}$ is conjugate to $f$ in $\bold G$.  
\end{exercise} 

\section{Generalization to positive characteristic} Theorem \ref{t7} does not hold if ${\rm char}(\bold k)=p>0$, since to kill a term of degree $r+s$, we need $s\ne r+1$ modulo $p$, not just in $\Bbb Z$. However, 
we still have a version of this theorem for the truncated 
group $G_p(\bold k)=G(\bold k)/H_p(\bold k)$ 
where $H_p(\bold k)\subset G(\bold k)$ is the normal subgroup 
of elements $f(x)=x+O(x^{p+3}).$ The group $G_p(\bold k)$ consists of polynomials
$$
f(x)=x+a_{2} x^{2}+\cdots+a_{p+2} x^{p+2}
$$ 
with $a_{i} \in \bold k$ under composition modulo terms $x^j$ with $j\ge p+3$.

\begin{thm}\label{t7a} Nontrivial conjugacy classes in $G_p(\bold k)$ are represented by elements 
$f_{r,\alpha,\beta}$ where $\alpha,\beta\in \bold k,\alpha\ne 0$, 
$1\le r\le p+1$, where $r, \alpha, \beta$ are uniquely determined if $r\le \frac{p+1}{2}$, 
otherwise only $\alpha$ is uniquely determined and the conjugacy class of $f_{r,\alpha,\beta}$ is independent of $\beta$. 
\end{thm}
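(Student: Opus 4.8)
The plan is to rerun the successive-approximation normalization from the proof of Theorem \ref{t6}, now over a field $\bold k$ with $\mathrm{char}(\bold k)=p$ and inside the finite-length group $G_p(\bold k)$, watching the single spot where positive characteristic intervenes: the factor $r+1-s$ produced by an elementary conjugation must now be invertible modulo $p$, not merely nonzero in $\ZZ$.

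For existence, I take a nontrivial $f\in G_p(\bold k)$, let $r+1\le p+2$ be the lowest degree in which $f$ departs from $x$, and write $f(x)=x-\alpha x^{r+1}+\cdots$ with $\alpha\ne0$, so $1\le r\le p+1$; as in characteristic $0$, conjugation preserves $r$ and $\alpha$. Conjugating by $g(x)=x+cx^{s}$ and quoting the coefficient identity from the proof of Theorem \ref{t6}, namely $g^{-1}\circ h\circ g(x)=x-\alpha x^{r+1}+(\beta-c\alpha(r+1-s))x^{r+s}+\cdots$, I can annihilate the coefficient of $x^{r+s}$ exactly when $r+1-s\not\equiv0\pmod p$, while every coefficient of degree between $r+2$ and $r+s-1$ is left untouched. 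Running this for $s=2,3,\dots,p+2-r$ removes every non-resonant term, the resonant degrees being those $d=r+s$ with $d\equiv 2r+1\pmod p$ inside the truncation window $[r+2,p+2]$.

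The conceptual heart, which I expect to be the main point, is to determine which resonant degrees actually survive inside that window. Writing the resonance as $s\equiv r+1\pmod p$ with $2\le s\le p+2-r$, a direct check shows the genuinely characteristic-$p$ solutions $s=r+1\pm p$ are expelled from this range ($s=r+1-p$ forces $r=p+1$, which already empties the window, and $s=r+1+p$ is always too large), so inside the window the only resonance is the universal one $s=r+1$, i.e.\ $d=2r+1$. This degree lies in $[r+2,p+2]$ precisely when $2r+1\le p+2$, i.e.\ $r\le\frac{p+1}{2}$. Hence for $r\le\frac{p+1}{2}$ every term but $x^{2r+1}$ is killed and $f$ is conjugate to $f_{r,\alpha,\beta}$, whereas for $r>\frac{p+1}{2}$ (the case $r=p+1$ having empty window) no in-window resonance occurs, every term of degree $\ge r+2$ is removed, and $f$ is conjugate to $x-\alpha x^{r+1}$, which is $f_{r,\alpha,\beta}$ after truncation since $2r+1>p+2$; this already exhibits independence of $\beta$ in that range.

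It remains to prove, for $r\le\frac{p+1}{2}$, that $\beta$ is a genuine invariant. Suppose $g^{-1}\circ f_{r,\alpha,\beta}\circ g=f_{r,\alpha,\beta'}$ with $g=x+\sum_{m\ge2}c_mx^m$, and let $m$ be least with $c_m\ne0$. The same coefficient identity shows the lowest-degree discrepancy between $g^{-1}\circ f_{r,\alpha,\beta}\circ g$ and $f_{r,\alpha,\beta}$ sits in degree $r+m$ with value $-c_m\alpha(r+1-m)$. If $2\le m\le r$ then $r+m\le 2r\le p+1$ is non-resonant (as $1\le r+1-m\le r-1<p$) and both normal forms vanish there, forcing $c_m=0$, a contradiction; thus $c_2=\dots=c_r=0$. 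Then the first discrepancy occurs in degree $\ge 2r+1$, and at degree $2r+1$ the identity gives the factor $(r+1-s)|_{s=r+1}=0$, so the coefficient of $x^{2r+1}$ is unchanged and $\beta=\beta'$. This closes uniqueness. The one routine point I would treat carefully throughout is the verification that each elementary conjugation truly fixes all lower coefficients, so that the inductions above are legitimate; granting that, the whole argument is the characteristic-$0$ proof of Theorem \ref{t7} with the truncation-window analysis grafted on.
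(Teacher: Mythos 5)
Your proposal is correct and follows essentially the same route as the paper: the paper's one-line proof observes that within the truncation window the resonance condition $s=r+1$ in $\Bbb Z$ coincides with $s\equiv r+1 \pmod p$, which is exactly the window analysis you carry out explicitly. The additional detail you supply (expelling $s=r+1\pm p$ from the range, and the direct minimal-$m$ argument for the invariance of $\beta$) is an unpacking of what the paper delegates to the proof of Theorem \ref{t7}.
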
 

\begin{proof} This follows immediately from Theorem \ref{t7} since the equality 
$s=r+1$ in $\Bbb Z$ for $r+s\le p+1$ is equivalent to the same equality in $\bold k$. 
\end{proof} 

\section{Application to finite groups} It is interesting that Theorem \ref{t7a} (for $\bold k$ a finite field) can be applied to the asymptotic theory of finite groups. Namely, let $q(N)$ be the smallest number of conjugacy classes of a finite group $\Gamma$ of order $\ge N$. How does $q(N)$ behave as $N$ grows? 

First of all, we have the following simple theorem:

\begin{thm}\label{t8} (E. Landau, 1903) 
$$
\lim_{N\to \infty}q(N)=\infty.
$$ 
\end{thm}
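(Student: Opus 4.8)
The plan is to reduce Theorem~\ref{t8} to a finiteness statement about finite groups with few conjugacy classes, and then to prove that finiteness via the class equation combined with a counting bound on representations of $1$ as a sum of unit fractions. Concretely, it suffices to produce, for each $k$, a bound $B(k)$ such that every finite group with at most $k$ conjugacy classes has order at most $B(k)$. Granting this, for $N>B(k)$ every group of order $\ge N$ must have more than $k$ conjugacy classes, so $q(N)>k$; since $k$ is arbitrary, this forces $q(N)\to\infty$.

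To obtain such a bound I would use the class equation. Let $\Gamma$ be finite of order $n$ with conjugacy classes $C_1,\dots,C_k$ and representatives $g_1,\dots,g_k$, and write $m_i=|C_\Gamma(g_i)|$ for the orders of the centralizers. By orbit--stabilizer $|C_i|=n/m_i$, so the class equation $\sum_i|C_i|=n$ becomes, after dividing by $n$,
$$
\sum_{i=1}^{k}\frac{1}{m_i}=1,
$$
where each $m_i$ is a positive integer dividing $n$. Since the identity element is central, one of the $m_i$ equals $n$, and as every $m_i$ divides $n$ we in fact have $n=\max_i m_i$. Thus bounding $n$ amounts to bounding the largest denominator in a representation of $1$ as a sum of $k$ unit fractions.

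The heart of the matter is therefore the purely combinatorial claim that for each $k$ and each positive rational $s$ the set of tuples of positive integers $(m_1,\dots,m_k)$ with $\sum_{i=1}^{k}1/m_i=s$ is finite. I would prove this by induction on $k$. Ordering $m_1\le\cdots\le m_k$, the largest summand $1/m_1$ satisfies $s/k\le 1/m_1\le s$, so $m_1$ lies in the finite set of integers in $[1/s,\,k/s]$; for each admissible value of $m_1$ the remaining coordinates are a $(k-1)$-tuple summing to the fixed rational $s-1/m_1\ge 0$ (the value $0$ admitting no positive solutions), and the inductive hypothesis applies. Setting
$$
B(k)=\max_{1\le j\le k}\ \max\Bigl\{\max_i m_i : (m_1,\dots,m_j)\in\Z_{>0}^{\,j},\ \textstyle\sum_{i=1}^{j}1/m_i=1\Bigr\},
$$
which is finite by the case $s=1$, any group with $j\le k$ classes yields via the class equation such a $j$-tuple with $\max_i m_i=n$, whence $n\le B(k)$, as required.

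The class-equation reduction is routine, so the genuine content lies in the finiteness of unit-fraction representations. The main obstacle there is that peeling off the smallest denominator changes the target sum from $1$ to a \emph{different} rational $s-1/m_1$; this is precisely why the induction must be carried out for a general right-hand side $s$ rather than only for $s=1$, and keeping track of this shifting target is the one point demanding care.
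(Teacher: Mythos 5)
Your proposal is correct and follows essentially the same route as the paper: reduce via the class equation to the statement that $1$ has only finitely many representations as a sum of $k$ unit fractions, and prove that by induction on $k$ for a general rational target $s$, bounding the largest summand between $s/k$ and $s$. The only differences are cosmetic (ordering conventions and the explicit packaging of the bound $B(k)$).
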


\begin{proof} Suppose a group $\Gamma$ of order $N$ has $k$ conjugacy classes, and let $m_{1}, . .., m_{k}$ be the orders of their centralizers, with $m_1=N$ (as the centralizer of $1\in \Gamma$ is the entire group $\Gamma$). Then the sizes of the conjugacy classes are $\frac{N}{m_{1}}, \ldots, \frac{N}{m_{k}}$, so the class equation yields
$
\frac{N}{m_{1}}+\cdots+\frac{N}{m_{k}}=N \Rightarrow \frac{1}{m_{1}}+\cdots+\frac{1}{m_{k}}=1.
$
But for fixed $k$ and any $r\in \Bbb Q_{>0}$ the equation
$
\frac{1}{m_{1}}+\cdots+\frac{1}{m_{k}}=r
$ 
has only finitely many integer solutions $N=m_1\ge m_2\ge...\ge m_k\ge 1$. This is easy to see by induction: for $k=n$ we have $\frac{1}{m_n}\ge \frac{r}{n}$, so 
$m_n\le \frac{n}{r},$
which gives finitely many options for $m_n$, and the rest of $m_j$ are then determined from the equation 
 $
 \frac{1}{m_{1}}+\cdots+\frac{1}{m_{n-1}}=r-\frac{1}{m_n},
 $
 which has finitely many solutions by the inductive assumption. 

So for every $k$ there are only finitely many possible values of $m_1=N$, 
and the theorem follows. 
\end{proof} 

So how fast does $q(N)$ go to infinity? A lower bound is provided by the following theorem.

\begin{thm}\label{t9} (Pyber, 1992, \cite{P}) There exists $C>0$ such that for $N\gg 0$
$$
q(N) \geqslant C\frac{\log N}{(\log \log N)^8}.
$$
\end{thm}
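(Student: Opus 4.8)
The plan is to prove the per-group lower bound $k(\Gamma)\ge C\,\frac{\log|\Gamma|}{(\log\log|\Gamma|)^8}$ for every finite group $\Gamma$, where $k(\Gamma)$ denotes the number of conjugacy classes of $\Gamma$; since $q(N)=\min\{k(\Gamma):|\Gamma|\ge N\}$ and the function $u\mapsto u/(\log u)^8$ is increasing for large $u$, this immediately yields $q(N)\ge C\,\frac{\log N}{(\log\log N)^8}$ for $N\gg 0$. The argument is an induction along a chief series, driven by two elementary inequalities for a normal subgroup $M\trianglelefteq\Gamma$. First, the preimages of distinct conjugacy classes of $\Gamma/M$ are disjoint nonempty unions of conjugacy classes of $\Gamma$, so $k(\Gamma)\ge k(\Gamma/M)$. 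Second, $\Gamma$ permutes the $k(M)$ conjugacy classes of $M$ by conjugation through an action of $\Gamma/M$, and the $\Gamma$-classes contained in $M$ are exactly the orbits of this action, of which there are at least $k(M)/|\Gamma:M|$; hence $k(\Gamma)\ge k(M)/|\Gamma:M|$.

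Fix a chief series of $\Gamma$ and sort its factors into nonabelian ones (each a power $S^m$ of a nonabelian simple group) and abelian ones (each elementary abelian $V=\mathbb F_p^d$). The nonabelian factors are harmless: by the classification of finite simple groups one has $k(S)\ge c\log|S|$ for every nonabelian simple $S$ (in fact $k(S)$ grows polynomially in the rank while $\log|S|$ is only logarithmic), so such factors feed generously into the two inequalities and never constitute the bottleneck. The entire difficulty is therefore concentrated in long chains of abelian chief factors, i.e.\ essentially in the solvable part of $\Gamma$, where conjugacy classes must be extracted from linear actions.

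At an abelian chief factor $V=\mathbb F_p^d$ let $\bar\Gamma$ be the image of $\Gamma$ in $GL(V)=GL(d,p)$, so $\bar\Gamma=\Gamma/C_\Gamma(V)$. Here I would use a dichotomy. If $\bar\Gamma$ is small, then $V$ splits into many $\Gamma$-orbits -- at least $|V|/|\bar\Gamma|=p^d/|\bar\Gamma|$ of them -- and the second inequality converts these orbits into many conjugacy classes of $\Gamma$. If instead $\bar\Gamma$ is large, one discards $V$ and recurses on a quotient of shorter chief length using the first inequality, the large group $\bar\Gamma$ carrying the weight. Making this dichotomy quantitative -- fixing the threshold between ``small'' and ``large'' and guaranteeing enough orbits just below it -- is exactly what requires the theory of orbits of linear groups: regular- and large-orbit theorems together with bounds on the orders of primitive linear groups (themselves resting on the classification). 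These inputs are the real content of the proof.

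The main obstacle is not any single inequality but the global optimization. Traversing the chief series, one must decide at each factor which of the two inequalities to apply, so that the multiplicative losses $|\Gamma:M|$ incurred by the orbit inequality and the polylogarithmic losses in the orbit estimates aggregate to a factor no worse than $(\log\log|\Gamma|)^8$, while the accumulated gains still total $\log|\Gamma|$. Balancing these competing bounds uniformly across all configurations -- abelian versus nonabelian factors, small versus large acting groups -- is the technical heart of Pyber's argument, and the exponent $8$ is precisely the cost of carrying this bookkeeping through in full generality. By contrast, the pure $p$-group case is not the bottleneck: there $k(\Gamma)$ already grows like a power of $\log|\Gamma|/\log\log|\Gamma|$, comfortably exceeding the target, which is why the extremal examples of this paper still leave a wide gap below the truth.
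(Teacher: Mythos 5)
The paper does not prove Theorem \ref{t9}; it is quoted from Pyber's 1992 article, so there is no internal proof to compare against. Judged on its own terms, your proposal correctly identifies the architecture of Pyber's argument and the two elementary inequalities you start from are sound: $k(\Gamma)\ge k(\Gamma/M)$ because preimages of distinct classes of $\Gamma/M$ are disjoint nonempty unions of $\Gamma$-classes, and $k(\Gamma)\ge k(M)/[\Gamma:M]$ because the $\Gamma$-classes inside $M$ are the orbits of the induced $\Gamma/M$-action on the $k(M)$ classes of $M$. The reduction from a per-group bound to a bound on $q(N)$ is also fine.

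But what you have written is a roadmap, not a proof: every step that actually produces the inequality is asserted rather than established. Concretely, three inputs are missing. (1) The bound $k(S)\ge c\log|S|$ for nonabelian simple $S$ rests on the classification of finite simple groups and a case analysis over the families; you invoke it without proof (and the parenthetical justification is off --- $\log|S|$ is roughly quadratic in the Lie rank, not logarithmic). (2) The ``small versus large'' dichotomy for the image $\bar\Gamma\le GL(d,p)$ acting on an abelian chief factor requires quantitative orbit theorems for linear groups and order bounds for primitive linear groups; you correctly name these as ``the real content of the proof'' but supply neither statements nor proofs, so the dichotomy has no threshold and produces no numbers. (3) The global bookkeeping along the chief series --- deciding at each factor which inequality to apply so that the multiplicative losses aggregate to $(\log\log N)^8$ while the gains total $\log N$ --- is described as ``the technical heart'' but not carried out; in particular nothing in the proposal explains where the exponent $8$ comes from or why the losses do not swamp the gains. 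Since the theorem \emph{is} these three ingredients, the proposal does not constitute a proof; it is an accurate summary of someone else's. For the purposes of this expository paper that is arguably the right level of detail --- the author likewise only cites \cite{P} --- but it should be presented as a sketch with references, not as a proof.
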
 
The exponent $8$ for $\log\log N$ was improved by Keller to $7$ in 2008 (\cite{K}) and  by Baumeister, Mar\'oti and Tong-Viet to $3+\varepsilon$ for any $\varepsilon>0$ in 2015 (\cite{BMT}). It 
is conjectured that for some $C>0$
$$
q(N) \geqslant C\log N,
$$
but this conjecture is still open, and the exact asymptotics of $q(N)$ is unknown. 

However, we'll now show that Theorem \ref{t9} is, in fact, rather close to a sharp bound.

\begin{thm}\label{t10} (Kovacs and Leedham-Green, 1986, \cite{KLG})
$$
q(N) = O(\log^3 N),\ N\to \infty.
$$
\end{thm}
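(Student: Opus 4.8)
The plan is to construct, for each $N$, an explicit family of finite groups whose order is around $N$ but whose number of conjugacy classes grows only like $\log^3 N$. The natural candidate comes directly from the preceding sections: the groups $G_p(\Bbb F_q)$ of truncated formal changes of variable. These are $p$-groups (in fact $q$-groups), and Theorem \ref{t7a} tells us precisely how few conjugacy classes they have, since nontrivial classes are parametrized by the discrete data $r\in\{1,\dots,p+1\}$ together with the field parameters $\alpha\in\Bbb F_q^\times$ and (when $r\le\frac{p+1}{2}$) $\beta\in\Bbb F_q$. So the strategy is to count the order of $G_p(\Bbb F_q)$, count its conjugacy classes via Theorem \ref{t7a}, and then choose $p,q$ as functions of $N$ to optimize the ratio.

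\medskip

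First I would compute $|G_p(\Bbb F_q)|$. Its elements are polynomials $x+a_2x^2+\cdots+a_{p+2}x^{p+2}$, so there are $p+1$ free coefficients $a_2,\dots,a_{p+2}$, each ranging over $\Bbb F_q$; hence $|G_p(\Bbb F_q)|=q^{p+1}$. Next I would count conjugacy classes using Theorem \ref{t7a}. The number of nontrivial classes is a sum over $r$: for each $r$ with $1\le r\le\frac{p+1}{2}$ the data $(\alpha,\beta)$ with $\alpha\ne0$ gives roughly $(q-1)q$ classes, while for $\frac{p+1}{2}<r\le p+1$ only $\alpha\ne 0$ matters, giving $q-1$ classes. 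Adding the trivial class, one finds that the total number of conjugacy classes is bounded above by a constant times $p\,q^2$. Thus, writing $k$ for the class number and $n=|G_p(\Bbb F_q)|=q^{p+1}$, we have the crude bound $k=O(p\,q^2)$.

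\medskip

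Now I would optimize. Taking logarithms, $\log n=(p+1)\log q$, and we want to realize a given target order $N$ while keeping $k=O(p\,q^2)$ as small as possible. The decisive observation is that for fixed $q$ (say the smallest prime $q=p$, so that we work with $G_p(\Bbb F_p)$ and $\log n\approx p\log p$), the class number is polynomial in $p$ of modest degree while $\log n$ is essentially linear in $p$. More carefully, choosing $q=p$ gives $n=p^{p+1}$, so $\log n\approx p\log p$, whence $p\approx \frac{\log n}{\log\log n}$, and then $k=O(p\cdot p^2)=O(p^3)=O\!\left(\frac{\log^3 n}{(\log\log n)^3}\right)$, which is even better than the claimed $O(\log^3 N)$. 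To hit an arbitrary $N$ rather than only the sparse values $p^{p+1}$, I would interpolate: given $N$, pick $p$ as above and then take a direct product of $G_p(\Bbb F_p)$ with a suitable small auxiliary group (e.g.\ a copy of $\Bbb Z/m$ or an additional elementary abelian factor) to push the order just past $N$ without increasing the class count by more than a comparable factor, using that the class number of a direct product is the product of the class numbers.

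\medskip

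\textbf{The main obstacle} I expect is the interpolation step, i.e.\ covering \emph{all} integers $N$ and not merely the values $p^{p+1}$ for which the construction is clean. The orders $q^{p+1}$ grow very rapidly and leave large gaps, so some care is needed to fill in intermediate $N$ while controlling the class number; one must check that the auxiliary factor can always be chosen so that its contribution to both $\log N$ and to $k$ is absorbed into the $O(\log^3 N)$ bound, and that edge cases (small $N$, or $N$ just below a power $p^{p+1}$) do not spoil the estimate. A secondary technical point is making the counting in Theorem \ref{t7a} fully rigorous at the boundary $r=\frac{p+1}{2}$ and verifying that the over-counting there changes $k$ only by a constant factor. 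Once these bookkeeping issues are handled, the bound $q(N)=O(\log^3 N)$ follows immediately, and in fact the same argument suggests the sharper $O\!\left((\log N/\log\log N)^3\right)$.
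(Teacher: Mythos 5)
Your construction is exactly the paper's: take $\Gamma=G_p(\Bbb F_p)$, note $|\Gamma|=p^{p+1}$ while Theorem \ref{t7a} bounds the class number by $O(p^3)$, and choose $p\sim \log N/\log\log N$. The one point to correct is your ``main obstacle'': no interpolation is needed, because $q(N)$ is defined as the minimal class number over groups of order \emph{at least} $N$, so taking the smallest prime $p$ with $p^{p+1}\ge N$ already finishes the proof. Indeed, your proposed fix would not work as stated: the orders $p^{p+1}$ have consecutive ratios that are unbounded (even superpolynomial in $p$), so a direct product with a cyclic or elementary abelian factor large enough to bridge the gap would multiply the class number by a factor far exceeding $\log^3 N$.
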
 

\begin{proof} The proof is direct and similar to the one in \cite{KLG}. Let $p$ be a prime and $\Gamma=G_p(\Bbb F_p)$. Then 
by Theorem \ref{t7a}, nontrivial conjugacy classes in $\Gamma$ have representatives
$$
f_{r,\alpha,\beta}(x)=x-\alpha x^{r+1}+\beta x^{2 r+1},\
\alpha, \beta \in \mathbb{F}_p,\
\alpha \neq 0,\ 1 \leqslant r \leqslant p+1.
$$

The number of such representatives is $p(p-1)(p+1)< p^3$. On the other hand, 
$$
|\Gamma|=p^{p+1}.
$$ 
Thus if $N\le p^{p+1}$, i.e., 
\begin{equation}\label{ine}
\log N\le (p+1)\log p,
\end{equation} 
then $q(N)\le p^3$. 
In particular, this holds for $p=p_N$, where 
$p_N$ is the smallest prime satisfying \eqref{ine}, which satisfies the asymptotics 
$p_N \sim \frac{\log N}{\log \log N}.$
Thus $q(N)=O(\log^3 N)$, as claimed.
\end{proof}

\end{document}